\documentclass{amsart}

\usepackage{amscd}
\usepackage{amssymb}

\usepackage[T1]{fontenc}
\usepackage[latin1]{inputenc}
\usepackage{hyperref}
\usepackage[arrow,curve,matrix]{xy}
\usepackage{times}
\usepackage{graphicx}
\usepackage{color}
\usepackage{flafter}
\usepackage{placeins}
\usepackage{enumitem}

\usepackage[dpi=1270,PostScript=dvips,heads=LaTeX]{diagrams}
\diagramstyle{small,labelstyle=\scriptstyle}

\unitlength 1cm

\sloppy

%

 \setlength{\parindent}{.3 in}
\setlength{\parskip}{.03 in}


\def\ZZ{{\mathbf Z}}

\def\CC{{\mathbf C}}

\def\OO{{\mathcal O}}
\def\R{\mathbf{R}}
\def\L{\mathbf{L}}
\def\SS{\mathcal{S}}

\def\D{\mathbf{D}}

\def\F{\mathcal{F}}
\def\E{\mathcal{E}}
\def\P{\mathcal{P}}

\def\G{\mathcal{G}}

\def\Pic0{{\rm Pic}^0(X)}

\newcommand{\alb}{\textnormal{alb}}
\newcommand{\tn}[1]{\textnormal{#1}}
\newcommand{\Alb}{\textnormal{Alb}}
\newcommand{\lra}{\longrightarrow}
\newcommand{\noi}{\noindent}

\newcommand{\RR}{\mathbf{R}}

\newcommand{\frakm}{\mathfrak{m}}

\newcommand{\HH}[3]{H^{{#1}} \big( {#2} , {#3}
\big) }

\newcommand{\codim}{\tn{codim}\, }


\theoremstyle{plain}

\newtheorem{theorem}{Theorem}[section]
\newtheorem{theoremalpha}{Theorem}

\newtheorem{proposition/example}[theorem]{Proposition/Example}
\newtheorem{proposition}[theorem]{Proposition}
\newtheorem{corollary}[theorem]{Corollary}
\newtheorem{lemma}[theorem]{Lemma}

\theoremstyle{definition}
\newtheorem{definition}[theorem]{Definition}
\newtheorem{remark}[theorem]{Remark}
\newtheorem{example}[theorem]{Example}

\newtheorem{conjecture/question}[theorem]{Conjecture/Question}

\newtheorem{remark/definition}[theorem]{Remark/Definition}
\newtheorem{definition/notation}[theorem]{Definition/Notation}

\pagestyle{myheadings} \theoremstyle{remark}

\numberwithin{equation}{section}

\newcommand{\coker}{\operatorname{coker}}
\newcommand{\im}{\operatorname{im}}

\begin{document}

\title[Canonical cohomology as an exterior module]{Canonical cohomology as an exterior module}

\author {Robert Lazarsfeld}
\address{Department of Mathematics, University of Michigan,
525 E. University, Ann Arbor, MI 48109, USA } \email{{\tt
rlaz@umich.edu}}
\thanks{First author partially supported by NSF grant DMS-0652845}

\author{Mihnea Popa}
\address{Department of Mathematics, University of Illinois at Chicago,
851 S. Morgan Street, Chicago, IL 60607, USA } \email{{\tt
mpopa@math.uic.edu}}
\thanks{Second author partially supported by NSF grant DMS-0758253}

\author{Christian Schnell}
\address{Department of Mathematics, University of Illinois at Chicago,
851 S. Morgan Street, Chicago, IL 60607, USA } \email{{\tt
cschnell@math.uic.edu}}

\maketitle
 

{\dedicatory{\centerline{ \it To the memory of Eckart Viehweg}}}

\section*{Introduction}
Let $X$ be a smooth projective complex variety of dimension $d$, and set
\[
P_X \ = \ \bigoplus_{i = 0}^d \HH{i}{X}{\OO_X}  \ \ , \ \  Q_X \ = \  \bigoplus_{i = 0}^d \HH{i}{X}{\omega_X}.
\]
Via cup product, we may view these as graded modules over the exterior algebra
\[
E \ =_{\text{def}} \ \Lambda^{\bullet} \HH{1}{X}{\OO_X}.\footnote{Following the degree conventions of \cite{efs}, we take $E$ to be generated in degree $-1$, and we declare that the summand $\HH{i}{X}{\omega_X}$ of $Q_X$ has degree $-i$, whereas $\HH{i}{X}{\OO_X}$ has degree $d - i$ in $P_X$. Thanks to Serre duality, $Q_X$ and $P_X$ then become dual $E$-modules.}
\]
The ``complexity" of these $E$-modules was studied in \cite{lp}, where it was shown that while $P_X$ can behave rather unpredictably, the $E$-module $Q_X$ has quite simple algebraic properties. Specifically, let 
\[
k \ = \ k(X) \ = \ \dim X \, - \, \dim \alb_X(X)
\]
 denote the dimension of the generic fiber of the Albanese mapping \[\alb_X : X \lra \Alb(X)\]
 over its image. It was established in \cite{lp} (for compact K\"ahler manifolds) that $Q_X$ is $k$-regular as an $E$-module: it is generated in degrees $0 , \ldots,  -k$;  the first syzygies among these generators have potential degrees $-1 , \ldots,  -(k+1)$; and so on. In particular, if $X$ has maximal Albanese dimension -- i.e. when $k(X) = 0$ -- then $Q_X$ is generated in degree $0$ and has a linear resolution.
 
 The purpose of this note is to prove a more precise statement in case $k > 0$.
 \begin{theoremalpha}
 There is a canonical direct sum decomposition
\begin{equation}  Q_X \ = \ \bigoplus_{j = 0}^{k(X)} Q^j_X (j),\tag{*} \end{equation}
of $E$-modules, where $Q_X^j$ is $0$-regular.\footnote{Given a graded  $E$-module $M$, $M(j)$ denotes as usual the shifted $E$-module with $M(j)_\ell = M_{j + \ell}$.} Thus the minimal $E$-resolution of $Q_X$ is a direct sum of (shifts of) linear resolutions.  
 \end{theoremalpha}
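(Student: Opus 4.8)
The plan is to produce the splitting (*) geometrically, from Koll\'ar's decomposition of the derived pushforward of $\omega_X$ along the Albanese map. Write $a = \alb_X : X \lra A$ with $A = \Alb(X)$, so that $a$ maps onto $\alb_X(X)$ with $k$-dimensional generic fibre. By Koll\'ar's vanishing theorem one has $R^j a_* \omega_X = 0$ for $j > k$, and by his decomposition theorem
\[
\RR a_* \omega_X \ \cong \ \bigoplus_{j=0}^{k} R^j a_* \omega_X \, [-j]
\]
in the bounded derived category $\D^b(A)$. Passing to hypercohomology and using $\HH{i}{X}{\omega_X} = \HH{i}{A}{\RR a_* \omega_X}$ gives, for every $i$, a direct sum decomposition $\HH{i}{X}{\omega_X} = \bigoplus_{j=0}^{k} \HH{i-j}{A}{R^j a_* \omega_X}$. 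I would then set $Q_X^j = \bigoplus_{\ell} \HH{\ell}{A}{R^j a_* \omega_X}$, with $\HH{\ell}{A}{R^j a_* \omega_X}$ placed in degree $-\ell$. Writing $\ell = i - j$, the summand $\HH{\ell}{A}{R^j a_* \omega_X}$ occupies degree $-i = -\ell - j$ in $Q_X$; since it sits in degree $-\ell$ of $Q_X^j$, this is exactly its position in the shift $Q_X^j(j)$, yielding the vector-space decomposition $Q_X = \bigoplus_{j=0}^{k} Q_X^j(j)$.

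The next step is to upgrade this to a decomposition of $E$-modules. Since $a^* : \HH{1}{A}{\OO_A} \to \HH{1}{X}{\OO_X}$ is an isomorphism, we may identify $E = \Lambda^\bullet \HH{1}{A}{\OO_A}$, and by the projection formula the cup-product action of $\HH{1}{X}{\OO_X}$ on $\HH{\bullet}{X}{\omega_X}$ becomes the cup-product action of $\HH{1}{A}{\OO_A}$ on $\HH{\bullet}{A}{\RR a_* \omega_X}$. The key observation is that cup product with a class $\eta \in \HH{1}{A}{\OO_A} = \operatorname{Hom}_{\D^b(A)}(\OO_A, \OO_A[1])$ is induced by a morphism in $\D^b(A)$ and is therefore functorial; consequently it is diagonal for Koll\'ar's splitting, carrying $\HH{\ell}{A}{R^j a_* \omega_X}$ into $\HH{\ell+1}{A}{R^j a_* \omega_X}$. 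Thus each $Q_X^j$ is an $E$-module and $Q_X = \bigoplus_{j=0}^{k} Q_X^j(j)$ holds as $E$-modules. The decomposition is canonical because Koll\'ar's decomposition is: its summands are the (canonical) graded pieces of the Leray filtration, and the splitting itself is canonical through its Hodge-theoretic origin.

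It remains to show that each $Q_X^j$ is $0$-regular, i.e. that the cohomology module of the sheaf $R^j a_* \omega_X$ on the abelian variety $A$ has a linear resolution. For this I would combine two inputs. First, generic vanishing in the form due to Hacon: every higher direct image $R^j a_* \omega_X$ is a GV-sheaf on $A$, meaning
\[
\codim \{ L \in \Picc^0(A) : \HH{i}{A}{R^j a_* \omega_X \otimes L} \neq 0 \} \ \geq \ i \fall i.
\]
Second, the dictionary between generic vanishing and regularity over the exterior algebra underlying \cite{lp}: for a coherent sheaf $\F$ on $A$, the cohomology module $\bigoplus_i \HH{i}{A}{\F}$ is $0$-regular exactly when $\F$ is a GV-sheaf. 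Applying this to $\F = R^j a_* \omega_X$ shows that $Q_X^j$ is $0$-regular, and the theorem follows.

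The conceptual heart -- and the step I expect to be the main obstacle -- is the second paragraph: one must verify that Koll\'ar's purely sheaf-theoretic splitting of $\RR a_* \omega_X$ is compatible with, and refines, the cup-product $E$-module structure, so that (*) is a genuine decomposition of $E$-modules and not merely of graded vector spaces. Once the functoriality of cup product secures this, the argument closes, since the decomposition comes from Koll\'ar and the $0$-regularity of the pieces from generic vanishing via the \cite{lp} correspondence. A secondary point needing care is the canonicity of the splitting, for which the Hodge-theoretic (decomposition-theorem) provenance of Koll\'ar's result is the natural source.
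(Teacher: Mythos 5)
Your construction of the splitting (*) coincides with the paper's: Koll\'ar's decomposition \cite{kollar} produces it, and the $E$-module compatibility, via the functoriality of cup product with classes in $H^1(A,\OO_A) = \operatorname{Hom}_{\D(A)}(\OO_A,\OO_A[1])$, is indeed immediate. The paper itself treats this step as immediate (``the existence of a direct sum decomposition (*) follows immediately from Koll\'ar's theorem''), so, contrary to your closing paragraph, this is not where the difficulty lies. The essential assertion, and the place where your argument has a genuine gap, is the $0$-regularity of the summands $Q_X^j$.

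The ``dictionary'' you invoke --- that for a coherent sheaf $\F$ on an abelian variety the module $\bigoplus_i H^i(A,\F)$ is $0$-regular over $E$ if and only if $\F$ is a GV-sheaf --- is not a result of \cite{lp}, and the implication you need (GV $\Rightarrow$ $0$-regular) is not formal; establishing it for the sheaves $R^ja_*\omega_X$ is precisely the content of Theorem \ref{canonical}. What is formal is the opposite direction: exactness of the BGG complex at a given spot kills the corresponding $E_2$-terms and hence the abutment of the spectral sequence (Corollary \ref{abstract_vanishing}; this is how Corollary \ref{gv}, ``regularity implies generic vanishing,'' is proved). In your direction, Hacon's theorem (\cite{hacon}, translated through \cite{pp2}) gives only that the transform $\G_j$ of $(R^ja_*\omega_X)^\vee$ is a single sheaf, i.e.\ that the abutment of the spectral sequence whose total $E_1$-complex is $\L(P_X^j)$ (Lemma \ref{e1_page}) is concentrated in a single total degree. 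But vanishing of $E_\infty^{p,q}$ does not imply vanishing of $E_2^{p,q}$: the cancellation could be carried out by the higher differentials $d_2, d_3, \dotsc$, so concentration of the abutment alone does not yield exactness of $\L(P_X^j)$ away from its right end, which by Proposition \ref{regularity} is what $0$-regularity of $Q_X^j$ means. The paper's remark following Proposition \ref{vanishing} states exactly this caveat. Closing this gap is what the paper's technical machinery exists for: by \cite{gl2} the stalk at the origin of the \emph{total} object $\RR\Phi_P\OO_X$ is represented by a complex of free modules with linear differentials; the Deligne-type degeneration criterion (Lemmas \ref{degeneration} and \ref{lem:degen}) then forces $E_2$-degeneration; and, since linearity is known only for the total transform and not for each summand $\G_j$ separately, the spectral sequence is made functorial in the derived category (Lemma \ref{lem:homotopy}, Corollaries \ref{cor:E2} and \ref{cor:sum}) so that degeneration is inherited by the direct summands $\G_j[j-d]$ of $\RR\Phi_P\OO_X$. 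Without this degeneration argument your final step does not go through, and no citation to \cite{lp}, \cite{pp1} or \cite{hacon} supplies it.
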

 \noi Note that we do not assert that $Q_X^j \ne 0$ for every $j$, although $Q_X^k$ is necessarily non-vanishing. We remark that the existence of a direct sum decomposition (*) follows immediately from Koll\'ar's theorem \cite{kollar} on higher direct images of dualizing sheaves. The essential assertion of the Theorem is the regularity of the summands. Koll\'ar's decomposition appears in a related context in \cite{ch}. 
  
As in \cite{lp} the Theorem is proved by combining a package of results surrounding generic vanishing theorems with the BGG correspondence relating modules over an exterior algebra to linear complexes over a symmetric algebra.  
The additional tool required for the present improvement is a homological argument, based on a lemma on the degeneration of the spectral sequence associated to a filtered complex with homogeneous differentials going back to Deligne \cite{deligne}. Formalizing this in 
 the derived category, and combining it with Koll\'ar's decomposition theorem, allows one to circumvent some delicate questions of compatibility (as in \cite{ch}) between the derivative complexes appearing in \cite{gl2} and elsewhere, and the Leray spectral sequence.  
This background material is reviewed in \S 1.  
 The proof of Theorem A appears in \S 2. Finally, in \S 3 we make a few remarks concerning the extension of these ideas to more general integral transforms. 
 
 We are grateful to Herb Clemens, Christopher Hacon and Beppe Pareschi for helpful discussions, and to the referee for pointing out an  
 error in an earlier version.
 
 It is with respect and sadness that we dedicate this paper to the memory of Eckart Viehweg.   Viehweg's impact on the field was huge, and each of us has profited from his mathematics and his vision.  The first author in particular treasured thirty years of friendship with Eckart, from whom he learned so much.   Eckart will be  greatly missed.

\section{Preliminaries}

\noindent
{\bf Basics on the BGG correspondence.}
We briefly recall from \cite{efs}, \cite{eisenbud} and \cite{lp} some basic facts concerning the BGG correspondence.
Let $V$ be a $q$-dimensional complex vector space over a field $k$, and let 
$E = \bigoplus_{i=0}^q \bigwedge^i V$ be the exterior algebra over $V$. Denote by 
$W = V^\vee$ be the dual vector space, and by $S = {\rm Sym}(W)$ the symmetric algebra over $W$.       Elements of $W$ are taken to have degree $1$, while those in $V$ have degree $-1$.

Consider now a finitely generated graded module  $P = \bigoplus_{i=0}^dP_i$  over $E$. The \emph{dual} over $E$ of the module $P$ is defined to be the $E$-module 
\[ Q\ = \  \widehat{P} \ = \ \bigoplus_{j=0}^d P_{-j}^\vee \] (so that positive degrees are switched to negative ones and vice versa).  The basic idea of the BGG correspondence is that the properties of $Q$ as an $E$-module are controlled by a linear complex of $S$-modules constructed  from $P$. Specifically, one considers the complex $\L(P)$ given by
$$\cdots \lra S \otimes_{\CC} P_{j+1} \lra S\otimes_{\CC} P_j \longrightarrow S\otimes_{\CC} P_{j-1}\lra \cdots$$ 
with differential induced by 
$$s\otimes p \mapsto \sum_i x_i s \otimes e_i p,$$ 
where $x_i \in W$ and $e_i \in V$ are dual bases. 
We refer to \cite{efs} or \cite{eisenbud} for a dictionary linking $\L(P)$ and $Q$. 

As in \cite{lp}, we consider a notion of regularity for $E$-modules analogous to the theory of Castelnuovo-Mumford regularity for finitely generated $S$-modules, limiting ourselves here to  modules concentrated in non-positive degrees. 
\begin{definition} [\cite{lp} Definition 2.1] \label{regular} 
A finitely generated graded $E$-module $Q$ with no component of positive degree is called \emph{$m$-regular} if it is generated in degrees $0$ up to $-m$, and if its minimal free resolution has at most $m+1$ 
linear strands. Equivalently, $Q$   is $m$-regular if 
and only if 
\[ {\rm Tor}_i^E (Q, k)_{-i-j} \ = \ 0\]  for all $i \ge 0$ and all $j \ge m+1$.
\end{definition}
\noi In particular $0$-regular means being generated in degree $0$ and having a linear free $E$-resolution. The regularity of $Q$ can be computed from the BGG complex of  its dual as follows:

\begin{proposition}[\cite{lp} Proposition 2.2]\label{regularity}
Let $P$ be a  finitely generated graded module over $E$ with no component of negative
degree, say $P = \bigoplus_{i=0}^d P_i$.   Then $Q = \widehat{P}$ is $m$-regular if and only if $\L (P)$ is exact at the first $d-m$ steps from the left, i.e. if and only if the sequence
\[
0 \lra S \otimes_{\CC} P_{d} \lra S\otimes_{\CC} P_{d-1}\longrightarrow \cdots \lra S \otimes_{\CC} P_{m}\]
of $S$-modules is exact.  \qed
\end{proposition}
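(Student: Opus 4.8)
The plan is to reduce both conditions to a single statement about the cohomology modules $H^{-j}(\L(P))$ of the linear complex, and then to invoke the BGG/Koszul-duality dictionary of \cite{efs} to match these against the Betti numbers of $Q$. Writing $\L(P)$ cohomologically with the free module $S\otimes_k P_j$ placed in degree $-j$, so that the leftmost term is $S\otimes_k P_d$, the condition that $\L(P)$ be exact at its first $d-m$ steps is elementary to reformulate: it says precisely that $H^{-j}(\L(P)) = 0$ for every $j$ with $m+1\le j\le d$, and since $P_j = 0$ for $j>d$ this is the same as demanding $H^{-j}(\L(P)) = 0$ for all $j\ge m+1$. On the other side, by Definition \ref{regular} the module $Q=\widehat P$ is $m$-regular exactly when ${\rm Tor}_i^E(Q,k)_{-i-j} = 0$ for all $i\ge 0$ and all $j\ge m+1$. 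Thus everything comes down to proving the single equivalence
\[
H^{-j}(\L(P)) = 0 \ \ \forall\, j\ge m+1 \quad\Longleftrightarrow\quad {\rm Tor}_i^E(Q,k)_{-i-j} = 0 \ \ \forall\, i\ge 0,\ \forall\, j\ge m+1 .
\]

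The core of the argument is the strand-by-strand identification, for each fixed $j$, of the vanishing of the graded $S$-module $H^{-j}(\L(P))$ with the vanishing of the entire $j$-th linear strand $\{{\rm Tor}_i^E(Q,k)_{-i-j}\}_{i\ge 0}$ of the minimal $E$-free resolution of $Q$. I would obtain this from the derived equivalence of \cite{efs} between graded $E$-modules and linear complexes of free $S$-modules: under it $\L(P)=\mathbf{L}(P)$ is the object attached to $Q=\widehat P$, and its cohomology records exactly the higher linear strands of the resolution of $Q$. Concretely, one realizes $\L(P)$ as the complex $(S\otimes_k E)\otimes_E P$ built from the two-sided Koszul (Tate) complex $(S\otimes_k E,\theta)$, with $\theta=\sum_\nu x_\nu\otimes e_\nu$ the canonical element; since $E$ is Koszul with Koszul dual $S$, this complex is the device that converts the linear free resolution of the residue field over $E$ into the $S$-module structure on cohomology, and tracking degrees shows that the homological index $i$ of ${\rm Tor}$ is recorded as internal $S$-degree while the external cohomological index $-j$ records the grading of $P$. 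Because each $P_j$ is finite-dimensional, $H^{-j}(\L(P))$ is a finitely generated graded $S$-module, and graded Nakayama reduces the question of its vanishing to a question over $k$, which the dictionary then matches with the discrete Betti data ${\rm Tor}_i^E(Q,k)_{-i-j}$.

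The main obstacle is precisely this identification: one must keep straight the direction-reversal and the degree shifts built into Koszul duality --- in particular the Frobenius (Gorenstein) self-duality of $E$ that intervenes when passing between $P$ and its dual $Q=\widehat P$, together with the socle twist --- and verify that the vanishing of the whole module $H^{-j}(\L(P))$, rather than of a single graded piece, corresponds to the vanishing of all the ${\rm Tor}_i^E(Q,k)_{-i-j}$ as $i$ ranges. Once the dictionary of \cite{efs} is set up with these bookkeeping conventions fixed, the two vanishing ranges $\{j\ge m+1\}$ coincide on the nose and the equivalence displayed above --- hence the proposition --- follows immediately. The hypothesis that $P$ is concentrated in degrees $0,\dots,d$ enters only to guarantee that $\L(P)$ is supported in cohomological degrees $-d,\dots,0$, so that ``exact at the first $d-m$ steps from the left'' is literally the range $m+1\le j\le d$.
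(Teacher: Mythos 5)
Your proposal is correct and is essentially the paper's own approach: the paper gives no argument here beyond citing \cite{lp} Proposition 2.2, whose proof rests on exactly the reduction you describe --- matching $H^{-j}(\L(P))$, strand by strand, against the linear strands ${\rm Tor}_i^E(Q,k)_{-i-j}$ of the minimal free resolution of $Q$ via the dictionary of \cite{efs} (the Cartan resolution of $k$ over $E$ together with graded duality). Your displayed equivalence is true as stated --- indeed $H^{-j}(\L(P))_n \cong \bigl( {\rm Tor}_{n-j}^E(Q,k)_{-n} \bigr)^\vee$ --- so the only thing separating your sketch from a complete proof is the degree bookkeeping you yourself flag, which is precisely the content of the cited results of \cite{efs}.
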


\noindent
{\bf Filtered complexes and BGG complexes.}
Let $(R, \frakm)$ be a regular local $k$-algebra of dimension $e$, with residue field $k
= R / \frakm$.  Let $W = \frakm / \frakm^2$ be the cotangent space, and $V = W^\vee$ the 
dual tangent space. We have $\frakm^p / \frakm^{p+1} \simeq {\rm Sym}^p W$ for all $p \geq 0$. 

Let $\bigl( K^{\bullet}, d \bigr)$ be a bounded complex of finitely generated free
$R$-modules. We can filter the complex by defining $F^p K^n = \frakm^p K^n$ for all $p \geq 0$ and all $n$; 
we then have 
$$F^p K^n / F^{p+1} K^n \simeq \bigl( K^n \otimes_R k \bigr) \otimes_k {\rm Sym}^p W.$$
The standard cohomological spectral sequence associated to a filtered complex then looks as follows
\begin{equation} \label{eq:SS-mm}
	E_1^{p,q} = H^{p+q} \bigl( K^{\bullet} \otimes_R k \bigr) \otimes_k {\rm Sym}^p W
		\Longrightarrow H^{p+q} \bigl( K^{\bullet} \bigr).
\end{equation}

Note that, as $V \cong {\rm Ext}_R^1 (k, k)$, there is a natural action for each $n$ given by
$$ V \otimes H^n \bigl( K^{\bullet} \otimes_R k \bigr) \longrightarrow H^{n+1} \bigl( K^{\bullet} \otimes_R k \bigr).$$
Consider now
$$P_{K^{\bullet}} : = \bigoplus_i H^i \bigl( K^{\bullet} \otimes_R k \bigr) 
{\rm~~and~~} Q_{K^{\bullet}} : = \bigoplus_i H^i \bigl( K^{\bullet} \otimes_R k \bigr)^\vee. $$
Assume (after shifting) that $P_{K^{\bullet}}$ lives in degrees $0$ up to $d$, where $d$ is a positive integer, and that 
the degree of its $H^i$ piece is $d-i$. Using the action above, we can then see
$P_{K^{\bullet}}$ and $Q_{K^{\bullet}}$ as dual finitely generated graded modules
over the exterior algebra $E = \bigoplus \bigwedge^i  V$ if we consider the ${H^i}^\vee$ piece in $Q_{K^{\bullet}}$ in degree $-i$; $Q_{K^{\bullet}}$ then lives in degrees $-d$ to $0$. 

One can apply the BGG correspondence described above to $P_{K^{\bullet}}$ and obtain a linear complex $\L (P_{K^{\bullet}})$ of finitely generated free $S$-modules, where $S = {\rm Sym}~ W$.

\begin{lemma}\label{e1_page}
The total complex of the $E_1$-page in \eqref{eq:SS-mm}, with terms
$$E_1^n = \bigoplus_{p+q=n} E_1^{p,q} = H^n \bigl( K^{\bullet} \otimes_R k \bigr) \otimes_k S,$$
is isomorphic to the complex $\L (P_{K^{\bullet}})$.
\end{lemma}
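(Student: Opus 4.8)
The plan is to compute the two relevant differentials of the filtered complex $\bigl(K^{\bullet}, F^{\bullet}\bigr)$ directly and match the resulting complex with $\L(P_{K^{\bullet}})$ term by term. First I would record the $E_0$-differential. Since each $K^n$ is free, $F^p K^n / F^{p+1} K^n \simeq {\rm Sym}^p W \otimes_k (K^n \otimes_R k)$, and because passing to the $\frakm$-adic associated graded retains only the reductions mod $\frakm$ of the matrix entries of $d$, the induced differential $d_0$ on $E_0^{p,\bullet}$ is ${\rm id}_{{\rm Sym}^p W}\otimes \bar d$, where $\bar d = d\otimes_R k$ is the differential of $K^{\bullet}\otimes_R k$. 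Hence $E_1^{p,q} = {\rm Sym}^p W\otimes_k H^{p+q}(K^{\bullet}\otimes_R k)$, and summing over $p+q=n$ recovers the term $E_1^n = S\otimes_k H^n(K^{\bullet}\otimes_R k)$ of the statement. Since $d_1 : E_1^{p,q}\to E_1^{p+1,q}$ raises $p+q$ by one, it is a differential $E_1^n \to E_1^{n+1}$ on the total complex, so the two complexes at least have matching terms; everything comes down to identifying $d_1$ with the BGG differential.

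The heart of the computation is this identification. After completing (which changes neither the graded pieces of the $\frakm$-adic filtration nor $H^{\bullet}(K^{\bullet}\otimes_R k)$ nor the action of $V$), I may fix a regular system of parameters $x_1,\dots,x_e$ giving a basis of $W$, with dual basis $e_1,\dots,e_e$ of $V$, and Taylor-expand the differential as $d = \bar d + \sum_i x_i\,d^{(i)} + (\text{terms in }\frakm^2)$, where each $d^{(i)} : K^{\bullet}\otimes_R k \to K^{\bullet}\otimes_R k$ is $k$-linear of degree $+1$. The identity $d^2 = 0$ yields $\bar d\, d^{(i)} + d^{(i)}\,\bar d = 0$, so each $d^{(i)}$ descends to a map $H^n(K^{\bullet}\otimes_R k)\to H^{n+1}(K^{\bullet}\otimes_R k)$. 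I would then run the standard ``lift, differentiate, project'' recipe for $d_1$: by linearity it suffices to treat a simple tensor $s\otimes[\omega]\in E_1^{p,q}$, where $\bar d\omega = 0$; lifting to $x = \tilde s\,\tilde\omega\in\frakm^p K^{p+q}$ and using $R$-linearity of $d$ gives $dx = \tilde s\cdot d\tilde\omega$. Because $\bar d\omega = 0$, the constant part of $d\tilde\omega$ dies and its image in $\frakm K^{p+q+1}/\frakm^2 K^{p+q+1}\simeq W\otimes_k(K^{p+q+1}\otimes_R k)$ is $\sum_i x_i\otimes [d^{(i)}\omega]$; multiplying by $\tilde s$ and using that the associated graded of the $\frakm$-adic filtration on $R$ is $S$, the class of $dx$ in ${\rm gr}^{p+1}K^{p+q+1} \simeq {\rm Sym}^{p+1}W\otimes_k(K^{p+q+1}\otimes_R k)$ equals $\sum_i (x_i s)\otimes [d^{(i)}\omega]$. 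Thus $d_1(s\otimes[\omega]) = \sum_i x_i s\otimes [d^{(i)}\omega]$.

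It remains to recognize $[d^{(i)}]$ as the $E$-module structure defining $\L(P_{K^{\bullet}})$, and this compatibility is the step I expect to be the main obstacle. Concretely, I must verify that $[d^{(i)}\omega] = e_i\cdot[\omega]$, i.e.\ that the linear part of the differential of $K^{\bullet}$ computes the natural action of $V = {\rm Ext}^1_R(k,k)$ on $H^{\bullet}(K^{\bullet}\otimes_R k)$. I would prove this by computing the Yoneda action through the Koszul resolution of $k$ over $R$ --- whose first differential is contraction against the $e_i$ --- and checking that the induced operator on $H^{\bullet}(K^{\bullet}\otimes_R k)$ is exactly $d^{(i)}$; alternatively, if one takes this spectral-sequence description as the very definition of the $V$-action, the identification is tautological. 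Granting it, $d_1$ becomes $s\otimes p\mapsto \sum_i x_i s\otimes e_i p$, which is precisely the BGG differential of $\L(P_{K^{\bullet}})$. Finally I would align the gradings: under the convention that $H^i$ lies in degree $d-i$, the term $S\otimes_{\CC}(P_{K^{\bullet}})_j = S\otimes_k H^{d-j}(K^{\bullet}\otimes_R k)$ of $\L(P_{K^{\bullet}})$ corresponds to $E_1^{d-j}$, and the BGG differential $S\otimes (P_{K^{\bullet}})_j\to S\otimes (P_{K^{\bullet}})_{j-1}$ matches $d_1 : E_1^{d-j}\to E_1^{d-j+1}$. With all terms and differentials in agreement, the two complexes are isomorphic, proving the Lemma.
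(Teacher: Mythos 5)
Your proposal is correct and is essentially the paper's own argument: the paper's proof is a one-line deferral to \cite{lp} Lemma 2.3 (``the same argument''), and what you have written out --- identifying the $E_1$-terms via the associated graded, computing $d_1$ by the lift--differentiate--project recipe, and matching the linear part $d^{(i)}$ of the differential with the action of $V \cong \mathrm{Ext}^1_R(k,k)$ --- is exactly that argument carried out in detail for the abstract setting. The one step you leave schematic, namely the verification that $[d^{(i)}\omega] = e_i \cdot [\omega]$, is indeed the crux, and your proposed method (computing the Yoneda action via the Koszul resolution of $k$ over $R$ and comparing with the linear term of $d$) is the standard and correct way to close it.
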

\begin{proof}
This is an extension, with the same argument, of \cite{lp} Lemma 2.3.
\end{proof}

\noi This gives in particular the following criterion for the vanishing of cohomology modules.

\begin{corollary}\label{abstract_vanishing}
Assume that the BGG complex $\L (P_{K^{\bullet}})$ is exact at the term $ H^n \bigl( K^{\bullet} \otimes_R k \bigr) \otimes_k S$. 
Then we have $H^n (K^{\bullet}) = 0$.
\end{corollary}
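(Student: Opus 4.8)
The plan is to extract the vanishing of $H^n(K^{\bullet})$ from the behaviour of the spectral sequence \eqref{eq:SS-mm} along the single antidiagonal $p+q=n$. By Lemma \ref{e1_page} the total complex of the $E_1$-page, with its differential $d_1$, is exactly the BGG complex $\L(P_{K^{\bullet}})$. Since $d_1\colon E_1^{p,q}\to E_1^{p+1,q}$ preserves $q$, the total differential is block-diagonal in $q$ and the total complex splits as the direct sum over $q$ of the rows $(E_1^{\bullet,q},d_1)$. Hence the cohomology of $\L(P_{K^{\bullet}})$ at the term $H^n(K^{\bullet}\otimes_R k)\otimes_k S$ --- which is the degree-$n$ piece $\bigoplus_{p+q=n}E_1^{p,q}$ of the total complex --- equals $\bigoplus_{p+q=n}E_2^{p,q}$.

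First I would translate the hypothesis: exactness of $\L(P_{K^{\bullet}})$ at that term says this cohomology vanishes, so $E_2^{p,q}=0$ for every $(p,q)$ with $p+q=n$. Because each $E_r^{p,q}$ with $r\geq 2$ is a subquotient of $E_2^{p,q}$, the entire antidiagonal stays zero on every page, and therefore $E_\infty^{p,q}=0$ for all $p+q=n$. The filtration $F^pK^{\bullet}=\frakm^pK^{\bullet}$ induces on $H^n(K^{\bullet})$ a filtration with $F^0H^n(K^{\bullet})=H^n(K^{\bullet})$ and $\operatorname{gr}^pH^n(K^{\bullet})=E_\infty^{p,n-p}=0$; consequently $F^pH^n(K^{\bullet})=F^{p+1}H^n(K^{\bullet})$ for all $p\geq 0$, whence $H^n(K^{\bullet})=F^0H^n(K^{\bullet})=\bigcap_pF^pH^n(K^{\bullet})$.

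The hard part is this last step, passing from ``all graded pieces vanish'' to ``the module vanishes'', and it is where the infinite $\frakm$-adic nature of the filtration must be handled with care rather than by a naive bounded-convergence theorem. Two inputs make it work, both coming from the fact that $K^{\bullet}$ is a bounded complex of finitely generated free modules over the Noetherian local ring $R$. Krull's intersection theorem, applied to $K^{n+1}$, gives $\bigcap_m\frakm^mK^{n+1}=0$, which is precisely what identifies the eventual cocycles $Z_\infty^{p,q}$ of the spectral sequence with genuine cocycles and thus legitimizes the identification $E_\infty^{p,n-p}=\operatorname{gr}^pH^n(K^{\bullet})$ used above. Then the Artin--Rees lemma gives $F^pH^n(K^{\bullet})\subseteq\frakm^{p-c}H^n(K^{\bullet})$ for some constant $c$, so that $\bigcap_pF^pH^n(K^{\bullet})\subseteq\bigcap_m\frakm^mH^n(K^{\bullet})=0$ by Krull's theorem applied now to the finitely generated module $H^n(K^{\bullet})$. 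Combining the two displays yields $H^n(K^{\bullet})=0$.
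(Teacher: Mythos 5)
Your proof is correct and takes essentially the same route as the paper's: via Lemma \ref{e1_page}, exactness of $\L(P_{K^{\bullet}})$ at the term $H^n(K^{\bullet}\otimes_R k)\otimes_k S$ is precisely the vanishing of $E_2^{p,q}$ for all $p+q=n$, and the conclusion then follows from the convergence of the spectral sequence \eqref{eq:SS-mm}. The only difference is that you carefully justify that convergence (identifying $E_\infty^{p,n-p}$ with $\operatorname{gr}^p H^n(K^{\bullet})$ via Krull's intersection theorem, and killing $\bigcap_p F^p H^n(K^{\bullet})$ via Artin--Rees plus Krull), a point the paper's one-line proof treats as implicit in the statement of \eqref{eq:SS-mm}.
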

\begin{proof}
The exactness in the hypothesis is equivalent to the fact that $E^{p,q}_2 = 0$ for all $p$ and $q$ 
with $p+q = n$ in the spectral sequence \eqref{eq:SS-mm}, which implies the conclusion.
\end{proof}

\noindent
{\bf Complexes with homogeneous differentials.}
Consider as above  a bounded complex $\bigl( K^{\bullet}, d \bigr)$ of finitely generated free
$R$-modules.  Each differential $d^n \colon K^n \to K^{n+1}$ can
be viewed as a matrix with entries in the ring $R$. We say that the complex has
\emph{linear differentials} if there is a system of parameters $t_1, \dotsc, t_e$
for $R$ such that the entries of each $d^n$ are linear forms in $t_1, \dotsc,
t_e$. More generally, we say that $K^{\bullet}$ has \emph{homogeneous differentials
of degree $r$} if the entries of $d$ are homogeneous forms of degree $r$. Using the
fact that $R \subseteq k[[t_1, \ldots,t_e]]$, this means that the entries of each $d^n$, when viewed as
elements of the power series ring, are homogeneous polynomials of degree $r$.
The following degeneration criterion will be used in the proof of the main theorem.

\begin{lemma}\label{degeneration}
If $\bigl( K^{\bullet}, d \bigr)$ has homogeneous differentials of degree $r$, then the spectral
sequence in \eqref{eq:SS-mm} degenerates at the $E_{r+1}$-page. In particular, it degenerates
at the $E_2$-page if the complex has linear differentials.
\end{lemma}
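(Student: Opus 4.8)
The plan is to make explicit the extra grading concealed in the homogeneity hypothesis, and to use it to rule out all higher differentials except $d_r$. First I would record the elementary consequence of the hypothesis: if the entries of each $d^n$ are homogeneous of degree $r$ in $t_1, \dotsc, t_e$, then for $x \in F^p K^n = \frakm^p K^n$ the coefficients of $dx$ lie in $\frakm^r \cdot \frakm^p = \frakm^{p+r}$, so that
\[
d\bigl( F^p K^n \bigr) \subseteq F^{p+r} K^{n+1}.
\]
In particular the map induced on $\frakm^p/\frakm^{p+1}$ is zero, which already forces $d_0 = \cdots = d_{r-1} = 0$ (and shows $H^i(K^\bullet \otimes_R k) = K^i \otimes_R k$, consistent with Lemma \ref{e1_page}); the real content of the statement is therefore the vanishing of $d_s$ for $s \ge r+1$.

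Next I would pass to the completion $\hat R = k[[t_1,\dotsc,t_e]]$, which changes neither the filtration quotients $\frakm^p/\frakm^{p+1} \cong {\rm Sym}^p W$ nor the spectral sequence \eqref{eq:SS-mm}, and there introduce the auxiliary grading by $t$-order. Concretely, I would work with the graded model over the polynomial ring $S = {\rm Sym}\, W$: the same system of homogeneous matrices defines a complex $(C^\bullet, d)$ of graded free $S$-modules, with basis vectors placed in internal degree $0$, so that homogeneity of degree $r$ says exactly that $d$ raises internal degree by $r$; its degree-$\ge p$ filtration reproduces \eqref{eq:SS-mm}. The point of homogeneity is that, over $\hat R$, the complex $K^\bullet$ is literally the completion of $C^\bullet$, so the two filtered complexes have identical pages $E_s^{p,q}$ and differentials $d_s$.

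The key observation is then that the combined weight $w = (\text{internal degree}) - r \cdot (\text{cohomological degree})$ is preserved by $d$, since a contribution to $C^n$ in internal degree $j$ is sent into internal degree $j+r$ in $C^{n+1}$ and $(j+r) - r(n+1) = j - rn$. Hence $C^\bullet$ splits, compatibly with the filtration, as a direct sum $\bigoplus_w C^\bullet_{[w]}$ of subcomplexes, and it suffices to treat each $C^\bullet_{[w]}$ on its own. But within a single weight component the term in cohomological degree $n$ lies purely in internal degree $rn + w$, hence purely in filtration level $rn + w$; so two consecutive terms of $C^\bullet_{[w]}$ sit in filtration levels differing by exactly $r$. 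Since $d_s$ raises cohomological degree by $1$ and filtration level by $s$, a short index computation shows $d_s$ can be nonzero only when $s = r$. This yields $d_s = 0$ for every $s \ne r$; in particular $d_s = 0$ for all $s \ge r+1$, so the spectral sequence degenerates at $E_{r+1}$, and at $E_2$ when $r = 1$.

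The main obstacle is the reduction in the second paragraph — justifying that the $\frakm$-adic spectral sequence over the possibly non-graded local ring $R$ is computed by the homogeneous complex over $S$. This is precisely where homogeneity (rather than the weaker inclusion $d(F^p) \subseteq F^{p+r}$) is indispensable: it guarantees that passing to the associated graded loses no information about the differentials $d_s$, because $K^\bullet \otimes_R \hat R$ is the completion of the graded complex $C^\bullet$ and the finite pages of the spectral sequence are unaffected by the completion. Once this identification is in place, the weight decomposition and the degree count are routine.
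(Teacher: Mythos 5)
Your proof is correct, but it takes a genuinely different route from the paper's. The paper first proves an if-and-only-if criterion for degeneration at $E_{r+1}$ (Lemma~\ref{lem:degen}, generalizing Deligne's case $r=0$): the filtration must satisfy $F^k K^n \cap d ( K^{n-1} ) \subseteq d ( F^{k-r} K^{n-1} )$ for all $n,k$. It then verifies this condition directly over $R$: given $x$ with $dx \in \frakm^k K^n$, write $x = x' + x''$ with the components of $x'$ polynomials of degree at most $k-r-1$ in $t_1, \dotsc, t_e$ and those of $x''$ in $\frakm^{k-r}$; homogeneity makes the components of $dx'$ polynomials of degree at most $k-1$, while $dx' = dx - dx'' \in \frakm^k$, so $dx' = 0$ and $dx = dx'' \in d ( \frakm^{k-r} K^{n-1} )$. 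You instead complete, replace $\widehat{K}^{\bullet}$ by its graded polynomial model $C^{\bullet}$ over ${\rm Sym}\, W$, and split $C^{\bullet}$, compatibly with the filtration, into weight components for $w = (\text{internal degree}) - r \cdot (\text{cohomological degree})$; the resulting constraint $p = r(p+q) + w$ on the nonzero terms kills every $d_s$ with $s \neq r$. Your route yields a slightly stronger structural conclusion (the only differential that can ever be nonzero is $d_r$) and makes transparent why genuine homogeneity is indispensable: without it there is no weight grading, in line with the paper's $2 \times 2$ counterexample over $k[[t]]$. The paper's route stays entirely inside $R$, produces a reusable necessary-and-sufficient criterion, and avoids the one step you leave informal, namely that the finite pages are unchanged by completion and by passing from $\widehat{K}^{\bullet}$ to $C^{\bullet}$. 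That step is true but deserves one sentence of justification: both comparisons are maps of filtered complexes inducing isomorphisms on $E_1$ (all three $E_1$-pages are $H^{p+q} ( K^{\bullet} \otimes_R k ) \otimes_k {\rm Sym}^p W$ with the same differential), and a morphism of spectral sequences that is an isomorphism on $E_1$ is an isomorphism on every subsequent finite page; since degeneration at $E_{r+1}$ is a statement about those pages only, it transfers across both comparisons.
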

\begin{proof}
We use the technical criterion in Lemma~\ref{lem:degen} below: the spectral sequence
\eqref{eq:SS-mm} degenerates at the $E_{r+1}$-page if, and only if, the filtration satisfies
$$ \frakm^k K^n \cap d \bigl( K^{n-1} \bigr) \subseteq d \bigl( \frakm^{k-r} K^{n-1} \bigr)$$
for all $n, k \in \ZZ$. Let
$t_1, \dotsc, t_e$ be a system of parameters such that each differential $d$ in the
complex is a matrix whose entries are homogeneous polynomials of degree $r$. Now
suppose we have a vector $x \in K^{n-1}$ such that $dx$ has entries in $\frakm^k$. Write $x = x' +
x''$, in such a manner that the components of $x'$ are polynomials of degree at most
$k-r-1$, while the components of $x''$ belong to $\frakm^{k-r}$. All components of the
vector $dx'$ are then polynomials of degree at most $k-1$ in $t_1, \dotsc, t_e$,
while those of $dx''$ belong to $\frakm^k$.  The fact that $dx = dx' + dx''$ also has
entries in $\frakm^k$ then forces $dx' = 0$; thus $dx = dx''$. We conclude by the criterion mentioned above.
\end{proof}

The following degeneration criterion for the spectral sequence of a filtered complex
generalizes \cite{deligne} \S1.3, where the case $r=0$ is proved.

\begin{lemma}\label{lem:degen}
The spectral sequence of a filtered complex $(K^{\bullet}, F^{\bullet})$ degenerates at the
$E_{r+1}$-page if, and only if, the filtration satisfies
\begin{equation} \label{eq:cond-deg}
	F^k K^n \cap d \bigl( K^{n-1} \bigr) \subseteq d \bigl( F^{k-r} K^{n-1} \bigr)
\end{equation}
for all $n, k \in \ZZ$.
\end{lemma}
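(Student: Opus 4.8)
The plan is to use the classical cocycle–coboundary description of the pages of the spectral sequence of a filtered complex, and to convert the vanishing of the higher differentials into the inclusion \eqref{eq:cond-deg} by a chain-level argument. Writing $n=p+q$, set
\[ Z_s^{p,q}=F^pK^{n}\cap d^{-1}\bigl(F^{p+s}K^{n+1}\bigr),\qquad B_s^{p,q}=d\bigl(F^{p-s}K^{n-1}\bigr)\cap F^pK^{n}, \]
so that $E_s^{p,q}=Z_s^{p,q}\big/\bigl(Z_{s-1}^{p+1,q-1}+B_{s-1}^{p,q}\bigr)$ and $d_s\colon E_s^{p,q}\to E_s^{p+s,q-s+1}$ is induced by $d$. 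Degeneration at $E_{r+1}$ means precisely that $d_s=0$ for every $s\ge r+1$, and I record at the outset the identity $B_{s-1}^{p+s,q-s+1}=d\bigl(F^{p+1}K^{n}\bigr)\cap F^{p+s}K^{n+1}$, which links the coboundary part of the target of $d_s$ to the source level $p+1$. Read in these terms, \eqref{eq:cond-deg} is a \emph{strictness up to the shift $r$} of $d$ with respect to $F$, reducing to Deligne's strictness when $r=0$.

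For the implication ``\eqref{eq:cond-deg} $\Rightarrow$ degeneration'' --- the only one needed in Lemma~\ref{degeneration} --- I would argue directly. Fix $s\ge r+1$ and a class in $E_s^{p,q}$ represented by $x\in F^pK^{n}$ with $dx\in F^{p+s}K^{n+1}$. Then $dx\in F^{p+s}K^{n+1}\cap d(K^{n})$, and \eqref{eq:cond-deg} applied in degree $n+1$ with $k=p+s$ yields $dx\in d\bigl(F^{p+s-r}K^{n}\bigr)\subseteq d\bigl(F^{p+1}K^{n}\bigr)$, the last inclusion because $s-r\ge 1$. Hence $dx\in B_{s-1}^{p+s,q-s+1}$ by the identity above, so $[dx]=0$ in $E_s^{p+s,q-s+1}$ and $d_s=0$. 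As $s\ge r+1$ was arbitrary, the sequence degenerates at $E_{r+1}$.

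For the converse I would run a descending induction on $k$, using that the filtration is biregular (so $F^kK^n=0$ for $k\gg 0$, which supplies the base case) exactly as in Deligne's treatment of $r=0$. Fix $z\in F^kK^n\cap d(K^{n-1})$ with a preimage $w\in F^aK^{n-1}$; the goal is $z\in d(F^{k-r}K^{n-1})$. The engine is one ``improvement step'': when $a\le k-r-1$ one has $s:=k-a\ge r+1$, so $w\in Z_s^{a,(n-1)-a}$ and $d_s[w]=[dw]=[z]=0$ in $E_s^{k,n-k}$; unwinding this vanishing produces $z=dw_1+c$ with $w_1\in F^{a+1}K^{n-1}$ and $c=z-dw_1=d(w-w_1)\in F^{k+1}K^n\cap d(K^{n-1})$. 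The inductive hypothesis at level $k+1$ gives $c=dw_2$ with $w_2\in F^{k+1-r}K^{n-1}\subseteq F^{a+1}K^{n-1}$, so that $z=d(w_1+w_2)$ with $w_1+w_2\in F^{a+1}K^{n-1}$. Thus a preimage at level $a$ is upgraded to one at level $a+1$; iterating finitely often lands a preimage in $F^{k-r}K^{n-1}$, which is \eqref{eq:cond-deg}.

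The main obstacle is this converse: turning the abstract vanishing $d_s=0$ back into a filtration inclusion forces the chain-level decomposition of the improvement step and careful control of how far a preimage must be pushed down the filtration. The descending induction is clean for a bounded (biregular) filtration; for the $\frakm$-adic filtration of Lemma~\ref{degeneration}, which is separated and complete but not bounded above, one would either replace the base case by a convergence argument built on $\bigcap_pF^pK^n=0$, or simply observe that only the forward implication is invoked there. A more conceptual alternative, at the cost of extra machinery, is to pass to Deligne's d\'ecalage $\mathrm{Dec}(F)$: degeneration at $E_{r+1}$ for $F$ becomes a first-page (strictness) statement for a shifted filtration, reducing the whole lemma to the case $r=0$ of \cite{deligne}.
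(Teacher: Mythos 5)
Your proposal is correct, and on the direction the paper actually uses in Lemma~\ref{degeneration} --- that \eqref{eq:cond-deg} implies degeneration --- it is essentially the paper's own argument: both invoke \eqref{eq:cond-deg} at level $k=p+s$ to write $dx=dz$ with $z\in F^{p+1}K^{p+q}$; you conclude by noting that $dx$ then lies in the boundary subgroup $B_{s-1}^{p+s,q-s+1}$ of the target, while the paper replaces the representative $x$ by the cycle $y=x-z$, which is the same computation packaged differently. The genuine divergence is in the converse. The paper gets it in one step from the identification $E_{i+1}^{p,q}=E_{\infty}^{p,q}=F^pH^{p+q}(K^{\bullet})/F^{p+1}H^{p+q}(K^{\bullet})$ for $i\ge r$, followed by a descending induction on $i$ using exhaustiveness; this silently assumes the spectral sequence abuts to $H^{*}(K^{\bullet})$. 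Your converse instead stays entirely at the chain level, unwinding $d_s[w]=0$ in the Zassenhaus description and running a nested descending induction on $k$; the price is a base case, i.e.\ the filtration must be bounded above, which the $\frakm$-adic filtration of Lemma~\ref{degeneration} is not --- a limitation you flag honestly, together with two valid remedies (a convergence argument from $\bigcap_p F^pK^n=0$, or the observation that only the ``if'' direction is invoked there). So the two converses trade hypotheses: abutment (paper) versus biregularity (you); since the lemma as stated carries no hypotheses on the filtration at all, and its ``only if'' direction genuinely fails without some such assumption, your explicit accounting --- and the d\'ecalage reduction to Deligne's $r=0$ case --- is if anything more careful than the paper on this point. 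The index bookkeeping in your improvement step checks out ($B_{s-1}^{k,n-k}=d(F^{a+1}K^{n-1})\cap F^kK^n$ when $s=k-a$, and $k+1-r\ge a+1$ exactly when $a\le k-r-1$), so the argument is sound under the stated hypothesis.
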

\begin{proof}
By \cite{deligne} \S1.3, the entries of the spectral sequence are given by
\[
	E_r^{p,q} = \im \bigl( Z_r^{p,q} \to K^{p+q} / B_r^{p,q} \bigr),
\]
where we have set 
\begin{align*}
	Z_r^{p,q} &= \ker \bigl( d \colon F^p K^{p+q} \to
		K^{p+q+1} / F^{p+r} K^{p+q+1} \bigr), \\
	K^{p+q} / B_r^{p,q} &= \coker \bigl( d \colon F^{p-r+1} K^{p+q-1} \to
		K^{p+q} / F^{p+1} K^{p+q} \bigr).
\end{align*}
Moreover, the differential $d_r$ is defined by the rule $d_r [x] = [d x]$ for $x \in
Z_r^{p,q}$.

To prove the criterion, let us first assume that the spectral sequence degenerates at
$E_{r+1}$; in other words, that $d_{r+1} = d_{r+2} = \dotsb = 0$. This means that 
\[
	E_{i+1}^{p,q} = E_{\infty}^{p,q} =
		F^p H^{p+q} \bigl( K^{\bullet} \bigr) / F^{p+1} H^{p+q} \bigl( K^{\bullet} \bigr)
\]
for all $p,q \in \ZZ$ and $i \geq r$. Thus for any $x \in F^p K^{p+q}$ that satisfies $dx \in
F^{p+i+1} K^{p+q+1}$, there exists some $y \in F^p K^{p+q}$ with $dy = 0$ and
$x - y \in F^{p+1} K^{p+q} + d \bigl( F^{p-i+1} K^{p+q-1} \bigr)$. In other words,
taking $n = p+q+1$ and $k = p + i + 1$, we have
\[
	F^k K^n \cap d \bigl( F^{k-i-1} K^{n-1} \bigr) \subseteq 
		d \bigl( F^{k-i} K^{n-1} \bigr).
\]
Since the filtration is exhaustive, \eqref{eq:cond-deg} follows by descending induction on $i \geq r$.

Conversely, suppose that the condition in \eqref{eq:cond-deg} is satisfied. Consider
an arbitrary class $[x] \in E_{r+1}^{p,q}$, represented by an element $x \in
F^p K^{p+q}$ with $dx \in F^{p+r+1} K^{p+q+1}$. Since $F^{p+r+1} K^{p+q+1} \cap d
\bigl( K^{p+q} \bigr) \subseteq d \bigl( F^{p+1} K^{p+q} \bigr)$ by virtue of
\eqref{eq:cond-deg}, we can find $z \in F^{p+1} K^{p+q}$ with $dx = dz$. But then $y
= x - z$ represents the same class as $x$ and satisfies $dy = 0$; this shows that
$d_i [x] = d_i [y] = 0$ for all $i \geq r+1$, and proves the degeneracy
of the spectral sequence at $E_{r+1}$.
\end{proof}

\begin{example}
Here is an example showing that Lemma~\ref{degeneration} does not necessarily hold 
when the entries of the matrices representing the differentials are polynomials of degree \emph{at
most} $r$, hence homogeneity is necessary. 
Let $R = k[\![t]\!]$ be the ring of power series in one variable, and
consider the complex of free $R$-modules
\begin{diagram}[width=3em]
	R^{\oplus 2} 
		&\rTo^{ A = \left( \begin{smallmatrix} 1 & -t \\ t & 0 \end{smallmatrix} \right)}& 
		R^{\oplus 2}.
\end{diagram}
Now take $x = (t, 1)$; then $A\cdot x = \bigl( 0, t^2 \bigr)$ has components in
$\mathfrak{m}^2$, but there is no vector $y$ with components in $\mathfrak{m}$ such
that $A\cdot y = \bigl( 0, t^2 \bigr)$. Since the condition in Lemma~\ref{lem:degen} is
violated, the spectral sequence does not degenerate at $E_2$ in this case.
\end{example}

\noindent
{\bf Passing to the derived category.}
We now generalize the construction of the spectral sequence to
arbitrary objects in the bounded derived category $\D(R)$ of finitely generated
$R$-modules. We use the fact that $\D(R)$ is isomorphic to the category of bounded
complexes of finitely generated free $R$-modules up to homotopy \cite{weibel} \S10.4.

Let $C^{\bullet}$ be any object in $\D(R)$. We can find a complex $K^{\bullet}$ of
finitely generated free $R$-modules quasi-isomorphic to $C^{\bullet}$. If we apply
the construction of the previous paragraph to $K^{\bullet}$, we obtain a
spectral sequence
\begin{equation} \label{eq:SS-mm-gen}
	E_1^{p,q} = H^{p+q} \bigl( C^{\bullet} \otimes_R k \bigr) \otimes_k {\rm Sym}^p W
		\Longrightarrow H^{p+q} \bigl( C^{\bullet} \bigr),
\end{equation}
in which both the $E_1$-page and the limit can be computed from $C^{\bullet}$.

It remains to show that the spectral sequence \eqref{eq:SS-mm-gen} is well-defined and
only depends on $C^{\bullet}$ up to isomorphism in $\D(R)$. It suffices to show that if 
$C^{\bullet} \rightarrow D^{\bullet}$ is a quasi-isomorphic map of complexes, then it induces 
a canonical isomorphism between the two associated spectral sequences. Let $L^{\bullet}$
be a bounded complex of finitely generated free $R$-modules quasi-isomorphic to
the complex $D^{\bullet}$. Then $K^{\bullet}$ and $L^{\bullet}$ are also quasi-isomorphic, and
since both are bounded complexes of free $R$-modules, they must be homotopy
equivalent (see \cite{weibel} Theorem 10.4.8). In other words, there are maps of complexes
\[
	f \colon K^{\bullet} \to L^{\bullet} \qquad \text{and} \qquad
		g \colon L^{\bullet} \to K^{\bullet}
\]
such that $f \circ g - \operatorname{id}_{L^{\bullet}}$ and $g \circ f -
\operatorname{id}_{K^{\bullet}}$ are null-homotopic. Moreover, any two choices of $f$
(resp.\@ $g$) are homotopic to each other.  Since $f$ and $g$ clearly preserve the
filtrations defined by powers of $\frakm$, we get induced maps 
\[
	F_r \colon E_r^{p,q}(K^{\bullet}) \to E_r^{p,q}(L^{\bullet}) \quad \text{and}
		\quad
	G_r \colon E_r^{p,q}(L^{\bullet}) \to E_r^{p,q}(K^{\bullet})
\]
between the spectral sequences for the two filtered complexes. 

\begin{lemma} \label{lem:homotopy}
Let $h \colon K^{\bullet} \to L^{\bullet}$ be a map between two bounded complexes of
free $R$-modules. If $h$ is null-homotopic, then the induced map on spectral
sequences is zero.
\end{lemma}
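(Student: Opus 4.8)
The plan is to reduce everything to the single structural fact that an $R$-linear homotopy automatically respects the $\frakm$-adic filtration, after which the statement becomes the formal principle that a filtered null-homotopy induces the zero map on the spectral sequence from the $E_1$-page onwards.

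First I would spell out the hypothesis. To say that $h$ is null-homotopic is to provide an $R$-linear map $s$ of degree $-1$, with components $s^n \colon K^n \to L^{n-1}$, such that $h = d_L \circ s + s \circ d_K$. The key observation is that $s$ is a \emph{filtered} map for the filtrations $F^p K^{\bullet} = \frakm^p K^{\bullet}$ and $F^p L^{\bullet} = \frakm^p L^{\bullet}$: since $K^n$ and $L^{n-1}$ are free $R$-modules and $s^n$ is $R$-linear, one has $s^n(\frakm^p K^n) \subseteq \frakm^p L^{n-1}$ for every $p$ and $n$. This is the only place where the freeness of the complexes is used, and it is really the whole content of the lemma; the rest is formal.

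Next I would pass to associated graded objects. Both $d_K, d_L$ and $s$ preserve the filtration, so they induce operators on the $E_0$-pages $E_0^{p,q} = F^p K^{p+q}/F^{p+1} K^{p+q}$ (and similarly for $L$). Writing $d_0$ and $s_0$ for the induced operators, a direct bookkeeping on associated graded shows that the induced map $h_0$ satisfies $h_0 = d_0 \circ s_0 + s_0 \circ d_0$, where $d_0 \colon E_0^{p,q} \to E_0^{p,q+1}$ is the $E_0$-differential and $s_0 \colon E_0^{p,q} \to E_0^{p,q-1}$ lowers the total degree by one while preserving $p$. Thus $h_0$ is null-homotopic as a morphism of the complexes $(E_0^{p,\bullet}, d_0)$, and therefore induces the zero map after taking cohomology in the $q$-direction; that is, the induced map $h_1$ on $E_1^{p,q} = H \bigl( E_0^{p,\bullet}, d_0 \bigr)$ vanishes for all $p, q$.

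Finally I would invoke functoriality of the spectral sequence. Writing $h_r$ for the map induced by $h$ on the $E_r$-page, for $r \geq 2$ the map $h_r$ is obtained from $h_{r-1}$ by passing to $d_{r-1}$-cohomology; since $h_1 = 0$, it follows by induction that $h_r = 0$ for every $r \geq 1$, which is the assertion. The one step that requires care is the identity $h_0 = d_0 \circ s_0 + s_0 \circ d_0$ on associated graded, but once the filteredness of $s$ has been established this is routine, so I do not expect any genuine obstacle.
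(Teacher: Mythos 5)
Your proof is correct and takes essentially the same approach as the paper's: both arguments rest on the fact that the $R$-linear homotopy $s$ automatically respects the $\frakm$-adic filtration, so it descends to give a null-homotopy on the graded level (you phrase this on the $E_0$-page, the paper after tensoring with $k = R/\frakm$, which is the same data since $E_0^{p,\bullet} \cong {\rm Sym}^p W \otimes_k (K^{\bullet} \otimes_R k)$ for free modules), whence the induced map vanishes on $E_1$ and then, by functoriality of spectral sequences, on every later page.
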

\begin{proof}
Since $h \sim 0$, there is a collection of maps $s^n \colon K^n \to L^{n-1}$ with the
property that $h^n = d \circ s^n - s^{n+1} \circ d$. It follows that the maps $H^n
\bigl( K^{\bullet} \otimes_R k \bigr) \to H^n \bigl( L^{\bullet} \otimes_R k \bigr)$
induced by $h$ are zero. But this means that the map of spectral sequences induced by
$h$ is also zero, starting from the $E_1$-page.
\end{proof}

Since any two choices of $f$ are homotopic to each other, it follows that the maps
$F_r$ are independent of the choice of $f$, and therefore canonically determined by
the two complexes $K^{\bullet}$ and $L^{\bullet}$. Moreover, the fact that $f \circ
g$ and $g \circ f$ are homotopic to the identity shows that $F_r \circ G_r$ are $G_r
\circ F_r$ are equal to the identity. This proves that the two spectral sequences are
canonically isomorphic. 

\begin{corollary} \label{cor:E2}
Let $C^{\bullet}$ be any object in $\D(R)$. If $C^{\bullet}$ is quasi-isomorphic to
a complex of free $R$-modules with linear differentials, then the spectral sequence
\eqref{eq:SS-mm-gen} degenerates at the $E_2$-page.
\end{corollary}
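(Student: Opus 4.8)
The plan is to reduce the statement directly to the degeneration criterion already proved for honest bounded complexes of free modules, using the fact that the spectral sequence \eqref{eq:SS-mm-gen} depends only on $C^{\bullet}$ as an object of $\D(R)$. First I would unpack the hypothesis: by assumption there exists a bounded complex $K^{\bullet}$ of finitely generated free $R$-modules, with linear differentials, that is quasi-isomorphic to $C^{\bullet}$. The construction of \eqref{eq:SS-mm-gen} for an object of $\D(R)$ proceeds precisely by choosing such a free representative and applying the filtered-complex construction \eqref{eq:SS-mm} to it, so I am entitled to compute the spectral sequence of $C^{\bullet}$ using this particular model $K^{\bullet}$.

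Next I would invoke Lemma~\ref{degeneration} in the linear case, i.e. with $r = 1$: since the entries of each differential of $K^{\bullet}$ are linear forms in a system of parameters, the associated spectral sequence \eqref{eq:SS-mm} degenerates at the $E_2$-page. Finally I would appeal to the well-definedness discussion preceding the corollary. Any two bounded free complexes quasi-isomorphic to $C^{\bullet}$ are homotopy equivalent, and by Lemma~\ref{lem:homotopy} a null-homotopic map induces the zero map on spectral sequences; consequently the homotopy equivalence yields maps $F_r, G_r$ with $F_r \circ G_r$ and $G_r \circ F_r$ equal to the identity, hence a canonical isomorphism of spectral sequences. Thus the spectral sequence \eqref{eq:SS-mm-gen} attached to $C^{\bullet}$ is canonically isomorphic to the one attached to $K^{\bullet}$, and the degeneration at $E_2$ transfers, completing the argument.

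I do not expect any genuine obstacle here, since the technical content is entirely packaged into Lemma~\ref{degeneration} and the derived-category invariance established just above. The only point demanding care is logical rather than computational: the hypothesis furnishes a \emph{single} convenient free model (the one with linear differentials), and one must be sure that the conclusion drawn from that particular model is a property of $C^{\bullet}$ itself. This is exactly guaranteed by the canonical isomorphism between the spectral sequences of any two free representatives, so no compatibility check beyond what has already been verified is required.
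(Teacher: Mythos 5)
Your proposal is correct and matches the paper's (implicit) argument exactly: the paper states Corollary~\ref{cor:E2} without a separate proof precisely because it follows from choosing the linear free model, applying Lemma~\ref{degeneration} with $r=1$, and invoking the canonical isomorphism of spectral sequences established via Lemma~\ref{lem:homotopy} in the preceding discussion. Your attention to the logical point --- that degeneration proved on one free model is a property of $C^{\bullet}$ itself only because of that canonical isomorphism --- is exactly the content the corollary is meant to package.
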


\begin{corollary} \label{cor:sum}
Let $C_i^{\bullet}$, for $i = 1, \dotsc, k$, be a collection of objects in $\D(R)$.
If $C^{\bullet} = C_1^{\bullet} \oplus \ldots \oplus C_k^{\bullet}$ is quasi-isomorphic to a
complex of free $R$-modules with linear differentials, then the spectral sequence 
\eqref{eq:SS-mm-gen} for each $C_i^{\bullet}$ degenerates at the $E_2$-page.
\end{corollary}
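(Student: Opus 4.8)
The plan is to exploit the additivity of the entire construction with respect to direct sums, and then to observe that $E_2$-degeneration is automatically inherited by direct summands.

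First I would apply Corollary~\ref{cor:E2} directly to the total object: since $C^{\bullet} = C_1^{\bullet} \oplus \ldots \oplus C_k^{\bullet}$ is quasi-isomorphic to a complex of free $R$-modules with linear differentials, its spectral sequence \eqref{eq:SS-mm-gen} degenerates at the $E_2$-page. Next, I would choose for each $i$ a bounded complex $K_i^{\bullet}$ of finitely generated free $R$-modules quasi-isomorphic to $C_i^{\bullet}$. Then $K^{\bullet} = \bigoplus_i K_i^{\bullet}$ is a bounded complex of free $R$-modules quasi-isomorphic to $C^{\bullet}$, and its $\frakm$-adic filtration $F^p K^n = \frakm^p K^n$ is exactly the direct sum of the filtrations $F^p K_i^n = \frakm^p K_i^n$.

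The key step is to verify that the spectral sequence of $K^{\bullet}$ is the direct sum of those of the $K_i^{\bullet}$. Because the differential of $K^{\bullet}$ is block-diagonal relative to this decomposition and the filtration splits compatibly, the cycle, boundary, and page objects $Z_r^{p,q}$, $B_r^{p,q}$, $E_r^{p,q}$ from the proof of Lemma~\ref{lem:degen} all decompose as direct sums of the corresponding objects for the individual $K_i^{\bullet}$; likewise each differential $d_r$ is the direct sum of the differentials $d_r$ on the summands. By the independence of \eqref{eq:SS-mm-gen} on the chosen free resolution, established above by means of Lemma~\ref{lem:homotopy}, this direct-sum spectral sequence is canonically isomorphic to the one considered in the first step, and in particular it degenerates at $E_2$. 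Finally I would read off the conclusion for the pieces: since the total higher differential $d_r$ is the direct sum over $i$ of the differentials on the $E_r$-page of $C_i^{\bullet}$, the vanishing of $d_r$ for all $r \geq 2$ on the total forces the vanishing of $d_r$ on each summand. Hence the spectral sequence \eqref{eq:SS-mm-gen} associated to each $C_i^{\bullet}$ degenerates at $E_2$.

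The main obstacle I anticipate is justifying cleanly that the spectral sequence of the direct-sum complex genuinely equals the direct sum of the individual spectral sequences, and that this identification is compatible with the canonical isomorphism coming from the well-definedness of \eqref{eq:SS-mm-gen}; this is where one must be slightly careful, since the individual $C_i^{\bullet}$ are \emph{not} assumed to be quasi-isomorphic to linear-differential complexes, and the degeneration is extracted purely from the splitting. Once this additivity is in place, the descent of $E_2$-degeneration to summands is formal, being nothing more than the vanishing of block-diagonal higher differentials.
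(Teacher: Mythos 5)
Your proposal is correct and follows essentially the same route as the paper: the paper's proof likewise observes that the spectral sequence of $C^{\bullet}$ is the direct sum of the spectral sequences of the $C_i^{\bullet}$ and then invokes Corollary~\ref{cor:E2}, treating the descent of degeneration to summands as immediate. Your additional details (choosing free complexes $K_i^{\bullet}$, noting the block-diagonal differentials and split filtration, and using Lemma~\ref{lem:homotopy} to identify the resulting spectral sequence with the canonical one) simply make explicit what the paper leaves implicit.
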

\begin{proof}
Since the spectral sequence for $C^{\bullet}$ is the direct sum of the individual
spectral sequences, the assertion follows immediately from Corollary \ref{cor:E2}. 
\end{proof}

\section{Decomposition of the canonical cohomology module}

Let $X$ be a smooth projective complex variety $X$ of dimension $d$ and irregularity $q = h^1 (X, \OO_X)$, with
\[  a : X \lra \Alb(X)\] the Albanese mapping of $X$,
and let 
\[ k \ = \ k(X) \ = \ \dim X \,  - \, \dim a(X) \]
be the dimension of the general fiber of $a$. Define 
\[ P_X \, = \, \bigoplus_{i = 0}^d ~ \HH{i}{X}{ \OO_X} \ \ , \ \ 
Q_X \, = \, \bigoplus_{i=0}^d~ H^i (X, \omega_X),
\] 
These are dual graded modules over 
the exterior algebra $E = \bigoplus_{i=0}^q \bigwedge^i V$, with $V = H^1 (X, \OO_X)$. Here $H^i (X, \omega_X)$ is considered in degree 
$-i$ and $H^i (X, \OO_X)$ in degree $d-i$.
The main result of Koll\'ar \cite{kollar} asserts that one has a splitting
\begin{equation}\label{eq:splitting}
\RR a_* \omega_X \ \cong \ \bigoplus_{j = 0}^k \, R^ja_* \omega_X\, [-j] 
\end{equation}
in the derived category $\D(A)$. Therefore $Q_X$ can be expressed as a direct sum
\[
Q_X \ = \ \bigoplus_{j=0}^k\,   Q_X^j (j) \ \ ,  \ \ \text{with} \ \  Q_X^j  \ = \
\bigoplus_{i=0}^d \HH{i}{A}{R^ja_* \omega_X}.\]
Moreover this is a decomposition of $E$-modules: $E$ acts on $\HH{*}{A}{R^ja_* \omega_X}$ via cup product through the identification $\HH{1}{X}{\OO_X} = \HH{1}{A}{\OO_A}$, and we again consider $\HH{i}{A}{R^ja_* \omega_X}$ to live in degree $-i$.
In \cite{lp} Theorem B it was proved that the regularity of $Q_X$ over $E$ is equal to $k$. Here we prove the stronger

\begin{theorem}\label{canonical}
The modules $Q_X^j$ are $0$-regular for all $j= 0,\ldots,k$, and the minimal $E$-resolution of $Q_X$
splits into the direct sum of the linear resolutions of $Q_X^j (j)$. 
\end{theorem}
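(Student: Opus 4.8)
The plan is to realize $Q_X$ and its summands $Q_X^j$ as the BGG duals of objects in a derived category over a local ring, and then to feed Koll\'ar's splitting into the degeneration machinery of \S1. Work locally at the origin $0\in\Picc^0(X)=\hat A$: let $R$ be the (completed) local ring of $\hat A$ at $0$, a regular local ring of dimension $q$ whose cotangent space $W=\frakm/\frakm^2$ is canonically dual to $V=H^1(X,\OO_X)$, so that $S={\rm Sym}\,W$. Using the Poincar\'e bundle $\mathcal P$ on $A\times\hat A$ I would attach to $\RR a_*\omega_X$ the object $C^{\bullet}\in\D(R)$ obtained by localizing at $0$ the Fourier--Mukai transform of its Grothendieck dual, normalized (via Serre duality on $A$) so that the fibre cohomologies $H^i(C^{\bullet}\otimes_R k)$ are the graded pieces of $P_X$ and the induced action of $E=\bigwedge^{\bullet}V$ is cup product; thus $Q_{C^{\bullet}}\cong Q_X$. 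Koll\'ar's splitting \eqref{eq:splitting} then induces a decomposition $C^{\bullet}\cong\bigoplus_{j=0}^k C_j^{\bullet}[-j]$ in $\D(R)$ with $Q_{C_j^{\bullet}}\cong Q_X^j$, and I write $P^j:=\widehat{Q_X^j}=P_{C_j^{\bullet}}$.

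Two generic vanishing inputs drive the argument. First (linearity), the derivative complex of $\omega_X$ is linear, so the \emph{ambient} object $C^{\bullet}$ is quasi-isomorphic to a complex of free $R$-modules whose differentials are linear forms in a regular system of parameters, the entries being cup products with the universal deformation class; this is the content of the generic vanishing theorem in the form used in \cite{gl2} and \cite{lp}. Second (concentration), each $R^ja_*\omega_X$ is a GV-sheaf on $A$, so by Pareschi--Popa duality the dual transform $C_j^{\bullet}$ has cohomology concentrated in the single degree $n_0$ that matches generation of $Q_X^j$ in degree $0$; that is, $H^n(C_j^{\bullet})=0$ for $n\neq n_0$.

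I would then combine these through the formalism of \S1. By Corollary \ref{cor:sum}, the linearity of the single object $C^{\bullet}$ forces the spectral sequence \eqref{eq:SS-mm-gen} attached to \emph{each} summand $C_j^{\bullet}$ to degenerate at the $E_2$-page, even though the Koll\'ar pieces are produced only abstractly in $\D(A)$ and need not individually carry a visibly linear free model. By Lemma \ref{e1_page} the total complex of the $E_1$-page of the spectral sequence for $C_j^{\bullet}$ is exactly $\L(P^j)$, so its cohomology is $E_2$; degeneration identifies $E_2$ with $E_\infty=\operatorname{gr}H^{\bullet}(C_j^{\bullet})$. The concentration of the abutment in total degree $n_0$ then shows that $\L(P^j)$ is exact in every total degree except $n_0$, and since $n_0$ corresponds to the rightmost term $S\otimes_\CC(P^j)_0$ this is precisely the exactness at the first steps from the left required by Proposition \ref{regularity}. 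Hence $Q_X^j$ is $0$-regular.

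The main obstacle is exactly the compatibility issue flagged in the introduction. A priori one knows only that the \emph{whole} derivative complex $C^{\bullet}$ is linear, whereas $0$-regularity of the individual $Q_X^j$ needs $E_2$-degeneration for each Koll\'ar summand separately; without it, higher differentials $d_2,d_3,\dots$ could carry cohomology of $\L(P^j)$ out of degree $n_0$, so that the one-sided implication of Corollary \ref{abstract_vanishing} ($\L(P^j)$ exact $\Rightarrow$ $H^n(C_j^{\bullet})=0$) could not be reversed. This is precisely what Lemma \ref{degeneration} and Corollary \ref{cor:sum} resolve, replacing the delicate comparison of the derivative complexes with the Leray spectral sequence. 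Finally, since $Q_X=\bigoplus_{j=0}^k Q_X^j(j)$ as $E$-modules and each $Q_X^j$ now admits a linear free resolution, additivity of minimal free resolutions over direct sums shows that the minimal $E$-resolution of $Q_X$ is the direct sum of the shifted linear resolutions of the $Q_X^j(j)$, as claimed.
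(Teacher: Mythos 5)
Your proposal is correct and follows essentially the same route as the paper: localize the Fourier--Mukai transform of $\omega_X$ at the origin of $\Picc^0(X)$, use the Green--Lazarsfeld linearity of the derivative complex together with Corollary~\ref{cor:sum} to force $E_2$-degeneration of the spectral sequence \eqref{eq:SS-mm-gen} for each Koll\'ar summand separately, use the concentration of each summand in a single cohomological degree (the paper cites Hacon's result, which is the same fact as your GV/Pareschi--Popa input) to conclude that each $\L(P_X^j)$ is exact except at its right end, and finish with Proposition~\ref{regularity}. The only cosmetic difference is that the paper works with $\RR\Phi_P\OO_X$ and obtains your object $C^{\bullet}$ from it via Grothendieck duality, establishing the sheaf-theoretic decomposition \eqref{eq:sum} before localizing.
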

\begin{proof}
Let $P$ be a Poincar\'e bundle on $X \times \Pic0$. We denote by 
$$\RR\Phi_P : \D(X) \rightarrow \D({\rm Pic}^0 (X)),  \ \ \RR\Phi_P \E = \RR {p_2}_* (p_1^* \E \otimes P)$$
the integral functor given by $P$, and analogously for $\RR\Phi_{P^\vee}$. 
Following Mukai's notation \cite{mukai}, we also denote by 
$$\RR\widehat{\SS} : \D(A) \rightarrow \D({\rm Pic}^0 (X))$$ 
the standard Fourier-Mukai functor on $A$, again given by the respective Poincar\'e bundle $\P$, with $P = (a \times {\rm id})^* \P$.  We have $\RR \Phi_{P} = \RR\widehat{\SS} \circ \RR a_*$ and $\RR \Phi_{P^\vee} = (-1)^* \circ \RR\widehat{\SS} \circ \RR a_*$. 

For an object $\F$ we denote $\F^\vee = \R \mathcal{H}om (\F, \OO_X)$ and $\R \Delta \F := \R \mathcal{H}om (\F, \omega_X)$. Grothendieck duality gives for any object $\F$ in $\D(X)$ 
or $\D(A)$ the following formulas (see for instance the proof of \cite{pp2} Theorem 2.2):
\begin{equation}\label{eq:gd}
(\R\Phi_{P^\vee} \F)^\vee \simeq \R\Phi_P (\R\Delta \F)[d] \quad {\rm and} \quad (-1)^*(\R \widehat{\SS}\F)^\vee \simeq \R \widehat{\SS} (\F^\vee)[q].
\end{equation}
Applying this to $\F = \omega_X$, we obtain 
$$\RR \Phi_{P}  \OO_X  \ \cong \ \bigl( \RR \Phi_{P^\vee}  \omega_X \bigr)^\vee [-d]$$
and therefore by (\ref{eq:splitting}) it follows that in $\D (\Pic0)$ we have a splitting
\begin{equation} \label{eq:FM}
\begin{split}
\RR \Phi_{P}  \OO_X  \ &\cong \  \bigoplus_{j = 0}^k (-1)^* \bigl(\RR \widehat{\SS}
(R^ja_* \omega_X) \bigr)^\vee \ [j-d]  \\
\ &\cong \ \bigoplus_{j = 0}^k \RR \widehat{S} \bigl( (R^ja_* \omega_X)^\vee \bigr) [q+ j -d],
\end{split}
\end{equation}
where the second isomorphism again comes from (\ref{eq:gd}) applied to $\F = R^j a_* \omega_X$.
In addition, by \cite{hacon} \S4 we know that for all $j$ the sheaf $R^j a_* \omega_X$ satisfies the property
$$\RR \widehat{S} \bigl( (R^ja_* \omega_X)^\vee \bigr) \simeq R^q \widehat{S} \bigl( (R^ja_* \omega_X)^\vee \bigr)[-q],$$
meaning that the Fourier-Mukai transform of its dual is supported only in degree $q$.\footnote{As the referee points out, 
this is the only result used in the proof that at the moment is not known to hold in the K\"ahler setting.}
Combined with (\ref{eq:FM}), this finally gives the decomposition into a direct sum of (shifted) sheaves
\begin{equation}\label{eq:sum}
\RR \Phi_{P}  \OO_X  \ \cong \  \bigoplus_{j = 0}^k \  \G_j \ [j-d]
\end{equation}
with $\G_j : = R^q \widehat{\SS} \bigl( (R^ja_* \omega_X)^\vee \bigr)$.

We may pull back the object $\R \Phi_P \OO_X$ via the exponential map ${\rm exp}: V \rightarrow \Pic0$ centered at the origin. 
By \cite{gl2}, Theorem 3.2, we then have the identification of the analytic 
stalks at the origin
$$\mathcal{H}^i\big(({\mathcal{K}^{\bullet}})^{an}\big)_0  \  \cong  \ (R^i {p_2}_* \P)_0.$$
where $(\mathcal{K}^{\bullet})^{an}$ is the complex of trivial analytic vector bundles on $V$:
\[
0 \lra \OO_V \otimes \HH{0}{X}{\OO_X} \lra \OO_V \otimes \HH{1}{X}{\OO_X} \lra \ldots \lra \OO_V \otimes \HH{d}{X}{\OO_X} \lra 0,
\]
with maps given at each point of $V$  by wedging with the corresponding element of $\HH{1}{X}{\OO_X}$.
Passing from analytic to algebraic sheaves as in \cite{lp} Proposition 1.1, we obtain that the 
the stalk of $\R \Phi_P \OO_X$ at the origin is quasi-isomorphic to a complex of free modules over $R = \OO_{\Pic0, 0}$
whose differentials are linear (with respect to a system of parameters corresponding to a euclidean coordinate system on $V$ via the exponential map). This implies by Lemma \ref{degeneration} that the
spectral sequence derived from (1.2)
$$E_1^{p,q} = H^{p+q} \bigl({(\R \Phi_P \OO_X)}_0 \otimes_R k\bigr) \otimes_k {\rm Sym}^p W 
\Longrightarrow {(R^{p+q} \Phi_P \OO_X)}_0$$
degenerates at the $E_2$-page. By Lemma \ref{e1_page}, the total $E_1$-page of this spectral sequence is the BGG 
complex $\L (P_X)$. Denoting by $P_X^j$ the $E$-module dual to $Q_X^j$, the functoriality of the BGG correspondence implies that 
we have a decomposition
$$\L(P_X) \ \simeq \ \bigoplus_{j=0}^k\,  \L(P_X^j) [j].$$
Fix an index $0 \leq j \leq k$. Given (\ref{eq:sum}),  Corollary~\ref{cor:sum} implies that for each $j$ the spectral sequence 
$$E_1^{p,q} = H^{p+q} (\G_j \otimes_R k) \otimes_k {\rm Sym}^p W \Longrightarrow H^{p+q} (\G_j)_0.$$
degenerates at the $E_2$-page as well, while its total $E_1$-page is the complex $\L(P_X^j)$. 
As $\G_j$ is a single $R$-module, the limit of the 
corresponding spectral sequence is $0$ except for $p +q =0$. Now using again
Lemma \ref{e1_page}, this means that each complex $\L \bigl( P_X^j \bigr)$ is
exact except at its right end. But by Proposition \ref{regularity} 
this is equivalent to the fact that each $Q_X^j$ is $0$-regular as a graded $E$-module.
\end{proof}

\begin{remark}
The result above gives in particular a proof of the following statement for all the $R^j a_* \omega_X$: 
the stalks of the Fourier-Mukai transforms of their derived duals admit filtrations whose associated graded modules
are the cohomologies of their (linear) derivative complexes at the origin. The argument given here circumvents the need to check the compatibility of the differentials in the Leray spectral sequence with those in the derivative complex, stated in \cite{ch} Theorem 10.1.
\end{remark}

\section{General integral transforms}

We conclude by briefly noting that the technical material in the previous sections can be formally extended from the setting of 
$\R\Phi_P$ in the proof of Theorem \ref{canonical} to that of arbitrary integral functors. We hope that this may have future 
applications. The main point, undoubtedly known to experts, is that the BGG complexes we consider can be seen as \emph{local linearizations} of integral transforms.

Let $X$ and $Y$ be smooth projective varieties over an algebraically closed field $k$, of dimensions $d$ and $e$ respectively, 
and let $P$ be a locally free sheaf on $X \times Y$\footnote{This can be easily extended to any coherent sheaf on $X\times Y$, flat over $Y$.} 
inducing the integral transform
$$\R \Phi_P : \D (X) \rightarrow \D(Y),
~~ \R \Phi_P (\cdot) : = \R {p_Y}_* ( p_X^*(\cdot) \overset{\L}\otimes P).$$
For any $y\in Y$ we denote $P_y := P_{|X\times\{y\}}$. 
Fix $y_0 \in Y$, and denote $(R, \frakm) = (\OO_{Y, y_0}, \frakm_{y_0})$.
For any object $\F$ in $\D(X)$, we can consider the object 
$$C_{\F}^{\bullet} :=  \L i_{y_0}^*(\R\Phi_P \F) {\rm  ~in~} \D(R).$$
Recall that we use the notation $\R \Delta \F := \R \mathcal{H}om (\F, \omega_X)$.

\noindent
{\bf Exterior module structure.}
For each $n$ we have a basic isomorphism
\begin{equation}\label{eq:basic}
{\mathbb H}^n (X, \F\otimes P_{y_0}) \cong H^n (C^{\bullet}\otimes_R k(y_0))
\end{equation}
by applying the Leray isomorphism and the projection formula (cf. \cite{pp1} Lemma 2.1). 
We consider
$$P_{C_{\F}^\bullet}  : = \bigoplus_n {\mathbb H}^n (X, \F \otimes P_{y_0}) {\rm~~and~~}
Q_{C_{\F}^\bullet}  : = \bigoplus_n {\mathbb H}^n (X, \R\Delta \F \otimes P_{y_0}^\vee)$$
as dual modules over the exterior algebra $E = \bigoplus \bigwedge^i T_{Y, y_0}$, via the Kodaira-Spencer map
$$\kappa_{y_0}: T_{Y, y_0} \longrightarrow {\rm Ext}^1 (P_{y_0}, P_{y_0})$$
and the natural cup-product action of ${\rm Ext}^1 (P_{y_0}, P_{y_0})$ on $P_{C_{\F}^\bullet}$.  

\medskip

\noindent
{\bf Comparison of the BGG complex with the integral transform.}
The spectral sequence in  \eqref{eq:SS-mm-gen} applied to the object $C_{\F}^{\bullet}$, together with the isomorphism 
in \eqref{eq:basic}, translates into

\begin{lemma}\label{new_ss}
There is a cohomological spectral sequence
$$E_1^{p,q} = {\mathbb H}^{p+q} (X, \F\otimes P_{y_0}) \otimes_k {\rm Sym}^p W
\Longrightarrow H^{p+q} ( C_{\F}^{\bullet} ).$$
\end{lemma}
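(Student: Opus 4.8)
The plan is to observe that this lemma is nothing more than a specialization of the derived-category machinery assembled in the ``Passing to the derived category'' paragraph. By construction $C_{\F}^{\bullet}$ is an object of $\D(R)$ with $R = \OO_{Y, y_0}$, so the general spectral sequence \eqref{eq:SS-mm-gen} applies to it verbatim; the only real work is to rewrite its $E_1$-page using the base-change isomorphism \eqref{eq:basic}.

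First I would confirm that $C_{\F}^{\bullet}$ falls within the scope of \eqref{eq:SS-mm-gen}, i.e. that it is represented by a bounded complex of finitely generated free $R$-modules. This holds because $\R\Phi_P \F = \R {p_Y}_*(p_X^* \F \otimes P)$ has bounded coherent cohomology on $Y$ ($p_Y$ being proper, with $\F$ and $P$ suitably bounded and coherent), and passing to the stalk at $y_0$ via $\L i_{y_0}^*$ preserves this finiteness over the local ring $R$. I would also record that $R = \OO_{Y, y_0}$ satisfies the standing hypotheses of the ``Filtered complexes and BGG complexes'' paragraph: since $Y$ is smooth and $y_0$ is a closed point over the algebraically closed field $k$, the ring $R$ is a regular local $k$-algebra of dimension $e$ with residue field $R/\frakm \cong k(y_0) = k$. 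Thus $W = \frakm_{y_0}/\frakm_{y_0}^2$ is the cotangent space at $y_0$, dual to the tangent space $V = T_{Y, y_0}$ underlying $E$.

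With these identifications in place, the second step is to read off \eqref{eq:SS-mm-gen} for $C^{\bullet} = C_{\F}^{\bullet}$: it produces a spectral sequence with $E_1^{p,q} = H^{p+q}(C_{\F}^{\bullet} \otimes_R k) \otimes_k {\rm Sym}^p W$ abutting to $H^{p+q}(C_{\F}^{\bullet})$. The well-definedness and functoriality of this spectral sequence -- its independence of the chosen free resolution -- is precisely what the preceding discussion (culminating in Lemma~\ref{lem:homotopy}) guarantees, so no further check is needed there. Finally I would substitute the basic isomorphism \eqref{eq:basic}, namely $H^{p+q}(C_{\F}^{\bullet} \otimes_R k(y_0)) \cong \mathbb{H}^{p+q}(X, \F \otimes P_{y_0})$, into the $E_1$-term, replacing $H^{p+q}(C_{\F}^{\bullet} \otimes_R k)$ by $\mathbb{H}^{p+q}(X, \F \otimes P_{y_0})$ and yielding exactly the asserted spectral sequence.

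There is no genuine homological or analytic obstacle here: all the substance was already extracted in \S1 (the construction and invariance of \eqref{eq:SS-mm-gen}) and in the cited Lemma 2.1 of \cite{pp1} (the isomorphism \eqref{eq:basic}). The one point requiring attention is purely bookkeeping -- matching the abstract residue field $k = R/\frakm$ of the general setup with the geometric residue field $k(y_0)$, and reading the coefficients ${\rm Sym}^p W$ with $W$ the cotangent space at $y_0$ -- so that the translation between the abstract statement \eqref{eq:SS-mm-gen} and the geometric statement of the lemma is exact.
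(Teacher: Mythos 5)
Your proposal is correct and follows exactly the paper's route: the paper obtains Lemma~\ref{new_ss} precisely by applying the general spectral sequence \eqref{eq:SS-mm-gen} to the object $C_{\F}^{\bullet} \in \D(R)$ and rewriting the $E_1$-terms via the base-change isomorphism \eqref{eq:basic}. Your additional checks (boundedness and coherence of $\R\Phi_P\F$, regularity of $R = \OO_{Y,y_0}$, identification of $W$ with the cotangent space at $y_0$) are exactly the implicit bookkeeping the paper leaves to the reader.
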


By Lemma \ref{e1_page}, the total $E_1$-complex associated to this spectral sequence is the 
BGG complex $\L (P_{C_{\F}^\bullet})$.
This leads to the interpretation mentioned above: \emph{the BGG complex $\L (P_{C_{\F}^\bullet})$ is a linearization of the integral
transform $\R\Phi_P \F$ in a neighborhood of $y_0$}. 

This implies a refinement of the familiar base change criterion for the local vanishing of the 
higher derived functors, saying that if $H^n (X,\F \otimes P_{y_0}) = 0$, then $(R^n \Phi_P \F)_{y_0} = 0$. 

\begin{proposition}\label{vanishing}
Assume that the BGG complex $\L(P_{C_{\F}^\bullet})$ is exact at the term $H^n (X, \F\otimes P_{y_0}) \otimes_k S$. Then 
$(R^n \Phi_P \F)_{y_0} = 0$.
\end{proposition}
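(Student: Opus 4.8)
The plan is to read this Proposition as the concrete incarnation, for the complex $C_{\F}^\bullet$, of the abstract vanishing criterion already established in Corollary \ref{abstract_vanishing}. Since $Y$ is smooth, $R = \OO_{Y,y_0}$ is a regular local ring, so the entire package of \S 1 applies: I would fix a bounded complex $K^\bullet$ of finitely generated free $R$-modules quasi-isomorphic to $C_{\F}^\bullet$, and run the construction of the spectral sequence \eqref{eq:SS-mm-gen} on it. Because $\L i_{y_0}^*$ is simply localization at $y_0$ (an exact functor), one has $H^n(C_{\F}^\bullet) = (R^n\Phi_P\F)_{y_0}$, the stalk of the $n$-th cohomology sheaf; this is the quantity we must show vanishes.

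Next I would invoke Lemma \ref{new_ss}, which is nothing but \eqref{eq:SS-mm-gen} for $C_{\F}^\bullet$ after rewriting its $E_1$-page through the base-change isomorphism \eqref{eq:basic}:
$$E_1^{p,q} = \mathbb{H}^{p+q}(X, \F\otimes P_{y_0}) \otimes_k {\rm Sym}^p W \Longrightarrow H^{p+q}(C_{\F}^\bullet) = (R^{p+q}\Phi_P\F)_{y_0}.$$
By Lemma \ref{e1_page} the total complex of the $E_1$-page is exactly the BGG complex $\L(P_{C_{\F}^\bullet})$, whose $n$-th term is $\mathbb{H}^n(X,\F\otimes P_{y_0})\otimes_k S$. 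The decisive translation is that the cohomology of this total complex at the $n$-th spot is $\bigoplus_{p+q=n}E_2^{p,q}$, so the hypothesis that $\L(P_{C_{\F}^\bullet})$ be exact at the term $H^n(X,\F\otimes P_{y_0})\otimes_k S$ is precisely the vanishing $E_2^{p,q}=0$ for every $(p,q)$ with $p+q=n$.

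I would then finish exactly as in Corollary \ref{abstract_vanishing}: each $E_\infty^{p,q}$ is a subquotient of $E_2^{p,q}$, hence $E_\infty^{p,q}=0$ along the whole diagonal $p+q=n$; since the spectral sequence converges, the induced filtration on $H^n(C_{\F}^\bullet)$ has vanishing associated graded, and therefore $H^n(C_{\F}^\bullet)=(R^n\Phi_P\F)_{y_0}=0$.

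The real weight of the argument is concentrated in the two inputs just cited --- the identification (Lemmas \ref{e1_page} and \ref{new_ss}) of the total $E_1$-complex with $\L(P_{C_{\F}^\bullet})$, and the passage from exactness at one term to vanishing of the entire diagonal $E_2^{p,q}$ --- and these are already in hand from \S 1. The only point demanding a little care is the convergence of the $\frakm$-adic spectral sequence needed to deduce $H^n=0$ from the vanishing of its associated graded; but since $K^\bullet$ is bounded and the filtration is exhaustive and separated, the filtration on each $H^n$ is finite, so no genuine obstacle arises here beyond what was already used in Corollary \ref{abstract_vanishing}.
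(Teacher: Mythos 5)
Your proof is correct and is essentially identical to the paper's: the paper disposes of this proposition in one line by citing Lemma~\ref{new_ss} together with Corollary~\ref{abstract_vanishing}, and your write-up simply unpacks those two citations (the spectral sequence, the identification of its total $E_1$-page with $\L(P_{C_{\F}^\bullet})$ via Lemma~\ref{e1_page}, and the vanishing of the whole diagonal $p+q=n$ at $E_2$ and hence at $E_\infty$). One small slip in your closing remark: the filtration induced on $H^n(C_{\F}^\bullet)$ is not finite in general (e.g.\ for $K^{\bullet}=R$ in degree $0$ it is the $\frakm$-adic filtration); what the argument actually needs, and what Artin--Rees plus the Krull intersection theorem provide, is that this induced filtration is separated, which together with exhaustiveness and the vanishing of all graded pieces forces $H^n(C_{\F}^\bullet)=0$.
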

\begin{proof}
This follows from Lemma \ref{new_ss} and Proposition \ref{abstract_vanishing}.
\end{proof}

\begin{remark}
Note that the converse is true only if the spectral sequence degenerates at the
$E_2$-term, in particular if $C_{\F}^{\bullet}$ can be represented around $y_0$ by a
complex of free $R$-modules with linear differentials. It would be
interesting to study the differentials $d_i$ in the spectral sequence for $i \geq 2$.
\end{remark}

Let now $\G$ be a sheaf on $X$, and $\F := \R\Delta \G$. Note that in this case $P_{C_{\F}^\bullet}$ will live in degrees 
$0$ to $d = \dim X$ and $Q_{C_{\F}^\bullet}$ in degrees $-d$ to $0$.

\begin{corollary}[BGG exactness implies generic vanishing]\label{gv}
Assume that $\L(P_{C_{\F}^\bullet})$ is exact at the first $d-k$ 
steps from the left, or equivalently that $Q_{\F, y_0}$ is $k$-regular over $E$. Then  
$${\codim}_{y_0}{\rm Supp}~R^i \Phi_{P^\vee} \G \ge i -k, {\rm~for ~all~}i >0.$$
(In the language of \cite{pp1}, $\G$ is a $GV_{-k}$-sheaf with respect to $P^\vee$ in a neighborhood of $y_0$.)
\end{corollary}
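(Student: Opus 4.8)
The plan is to turn the homological hypothesis on $\L(P_{C_\F^\bullet})$ into stalk vanishing for $\R\Phi_P\F$ at $y_0$ via Proposition~\ref{vanishing}, and then to transport this to $\R\Phi_{P^\vee}\G$ by Grothendieck--Serre duality, reading the codimension bound off a spectral sequence of $\mathcal{E}xt$-sheaves. The two forms of the hypothesis are interchangeable by Proposition~\ref{regularity}. Write $\F=\R\Delta\G$, $e=\dim Y$, $R=\OO_{Y,y_0}$, and set $A^\bullet=(\R\Phi_{P^\vee}\G)_{y_0}$ and $B^\bullet=(\R\Phi_P\F)_{y_0}$ in $\D(R)$; since $R^i\Phi_{P^\vee}\G$ has stalk $H^i(A^\bullet)$ at $y_0$, the assertion is that ${\codim}_R H^i(A^\bullet)\geq i-k$ for all $i>0$.

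First I would establish a duality identity of the type \eqref{eq:gd}. Relative Grothendieck duality for the smooth proper projection $p_Y\colon X\times Y\to Y$, whose relative dualizing sheaf is $p_X^*\omega_X$, combined with the projection formula and exactly the computation in the proof of \cite{pp2} Theorem~2.2, gives a canonical isomorphism
\[ \R\mathcal{H}om_Y\bigl(\R\Phi_{P^\vee}\G,\,\omega_Y[e]\bigr)\ \cong\ \R\Phi_P\bigl(\R\Delta\G\bigr)\otimes\omega_Y\,[\,d+e\,] \]
in $\D(Y)$. Because $\R\mathcal{H}om$ commutes with localization and $\omega_{Y,y_0}\cong R$ is free, passing to stalks at $y_0$ and using biduality over the regular ring $R$ yields $A^\bullet\cong\R\mathcal{H}om_R(B^\bullet,R)[-d]$ in $\D(R)$.

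Next I would feed the hypothesis into Proposition~\ref{vanishing}. Under the degree convention of \S1 the summand $\mathbb{H}^n(X,\F\otimes P_{y_0})$ occupies degree $d-n$ of $P_{C_\F^\bullet}$, so exactness of $\L(P_{C_\F^\bullet})$ at its first $d-k$ steps from the left is exactness at the terms carrying $\mathbb{H}^n$ for $n<d-k$. Proposition~\ref{vanishing} then gives $(R^n\Phi_P\F)_{y_0}=0$ for every $n<d-k$; equivalently, $B^\bullet$ has cohomology only in degrees $\geq d-k$.

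Finally I would read off the codimension bound. From $A^\bullet\cong\R\mathcal{H}om_R(B^\bullet,R)[-d]$ one gets $H^i(A^\bullet)=H^{i-d}\bigl(\R\mathcal{H}om_R(B^\bullet,R)\bigr)$, and the hyper-$\mathcal{E}xt$ spectral sequence
\[ E_2^{p,q}=\mathcal{E}xt^p_R\bigl(H^{-q}(B^\bullet),R\bigr)\ \Longrightarrow\ H^{p+q}\bigl(\R\mathcal{H}om_R(B^\bullet,R)\bigr) \]
presents $H^i(A^\bullet)$ as a subquotient of the terms $\mathcal{E}xt^p_R(H^j(B^\bullet),R)$ with $p=i-d+j$ and $j\geq d-k$. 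Since ${\codim}_R\mathcal{E}xt^p_R(N,R)\geq p$ for any finitely generated $R$-module $N$, and here $p=i-d+j\geq i-k$, every contributing sheaf has codimension $\geq i-k$, so ${\codim}_R H^i(A^\bullet)\geq i-k$, which is the claim. I expect the principal difficulty to be the bookkeeping of shifts: one must pin down both the shift $[d+e]$ in the duality identity and the reflection $n\mapsto d-n$ from the \S1 degree convention, for it is exactly these that concentrate $B^\bullet$ in degrees $\geq d-k$ and thereby make the estimate come out as $i-k$ rather than the weaker $i-d$. This last homological step is a local form of the Pareschi--Popa duality for $GV$-sheaves, and one could instead cite \cite{pp1} for it.
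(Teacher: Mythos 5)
Your proof is correct and follows essentially the same route as the paper: the key step --- feeding the exactness hypothesis (with the degree bookkeeping $\mathbb{H}^n \leftrightarrow$ degree $d-n$) into Proposition~\ref{vanishing} to get $(R^n \Phi_P \F)_{y_0} = 0$ for all $n < d-k$ --- is exactly the paper's first step. The only difference is that where the paper then simply cites the local $GV$--$WIT$ equivalence of \cite{pp2}, Theorem 2.2, you inline a correct proof of (the relevant direction of) that equivalence, via relative Grothendieck duality giving $(\R\Phi_{P^\vee}\G)_{y_0} \cong \R\mathcal{H}om_R\bigl((\R\Phi_P\F)_{y_0}, R\bigr)[-d]$, the hyper-$\mathcal{E}xt$ spectral sequence, and the bound $\codim \mathcal{E}xt^p_R(N,R) \ge p$.
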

\begin{proof}
Proposition \ref{vanishing} implies that $(R^i \Phi_P \F)_{y_0} = 0$ for all $i < d-k$. By the basic local $GV$-$WIT$ equivalence 
\cite{pp2} Theorem 2.2, this is equivalent to the fact that $\G$ is $GV_{-k}$ with respect to $P^\vee$.
\end{proof}

\providecommand{\bysame}{\leavevmode\hbox
to3em{\hrulefill}\thinspace}

\end{document}